\numberwithin{equation}{section}
\newtheorem{theorem}[equation]{Theorem}
\newtheorem{proposition}[equation]{Proposition}
\newtheorem{lemma}[equation]{Lemma}
\newtheorem{corollary}[equation]{Corollary}
\theoremstyle{definition}
\newtheorem{rmk}[equation]{Remark}
\newenvironment{remark}[1][]{\begin{rmk}[#1] \pushQED{\qed}}{\popQED \end{rmk}}
\newtheorem{eg}[equation]{Example}
\newtheorem{defn}[equation]{Definition}
\newenvironment{definition}[1][]{\begin{defn}[#1]\pushQED{\qed}}{\popQED \end{defn}}
\newcommand{\bF}{\mathbf{F}}
\newcommand{\bk}{\mathbf{k}}
\newcommand{\bQ}{\mathbf{Q}}
\newcommand{\bv}{\mathbf{v}}
\newcommand{\bV}{\mathbf{V}}
\newcommand{\bZ}{\mathbf{Z}}
\newcommand{\rB}{\mathrm{B}}
\newcommand{\rH}{\mathrm{H}}
\newcommand{\arxiv}[1]{\href{http://arxiv.org/abs/#1}{{\tt arXiv:#1}}}
\DeclareMathOperator{\Tor}{Tor}
\begin{document}

\title[The cone of Betti tables over three non-collinear points in the plane]{The cone of Betti tables over three\\ non-collinear points in the plane}

\subjclass[2010]{13D02; 13C05}

\author{Iulia Gheorghita} 
\address{California Institute of Technology, Pasadena, CA}
\email{\href{mailto:igheorgh@caltech.edu}{igheorgh@caltech.edu}}

\author{Steven V Sam}
\address{University of California, Berkeley, CA}
\email{\href{mailto:svs@math.berkeley.edu}{svs@math.berkeley.edu}}
\urladdr{\url{http://math.berkeley.edu/~svs/}}

\date{December 22, 2014}

\maketitle

\begin{abstract}
We describe the cone of Betti tables of all finitely generated graded modules over the homogeneous coordinate ring of three non-collinear points in the projective plane. We also describe the cone of Betti tables of all finite length modules.
\end{abstract}

\section{Introduction}

The idea of studying the cone of Betti tables of finitely generated graded modules over a polynomial ring originates in the work of Boij and S\"oderberg \cite{boij-soderberg}. To summarize, they conjectured that the cone of Betti tables coming from Cohen--Macaulay modules (of a fixed codimension) is spanned by Betti tables coming from pure resolutions. This conjecture was proven by Eisenbud and Schreyer \cite{es:bs} and later improved to get a statement for the cone of all finitely generated modules \cite{bs2}. For a survey of these developments, we point the reader to \cite{es:survey, floystad}. 

A crucial step was the construction of the defining inequalities of the cone of Betti tables. Eisenbud and Schreyer found a surprising connection with the cone of cohomology tables of vector bundles on projective space. A conceptual understanding of this connection is proposed by Eisenbud and Erman \cite{eisenbud-erman}.

In a different direction, one can replace the polynomial ring with a singular ring and try to generalize these results. The main difference is that all finitely generated modules over polynomial rings have finite length free resolutions, whereas this property fails for singular rings. The next class of rings in terms of complexity of resolutions are the hypersurface rings: the minimal free resolutions of finitely generated modules become periodic (of period $2$). The cone of Betti tables over hypersurface rings of low embedding dimension are described in \cite{BBEG}.

Another perspective, when the ring is Cohen--Macaulay, is that the long-term behavior of a minimal free resolution is reduced to the study of minimal free resolutions of maximal Cohen--Macaulay modules. From this point of view, the next level of complexity of rings are the Cohen--Macaulay rings with finitely many isomorphism classes of indecomposable Cohen--Macaulay modules. These have been classified in \cite{eisenbud-herzog}; one family are the homogeneous coordinate rings of rational normal curves, and the cone of Betti tables over this ring have been described in \cite{kummini-sam}.

In this paper, we describe the cone of Betti tables over another example of such a Cohen--Macaulay ring: the homogeneous coordinate ring of $3$ non-collinear points in the projective plane. Up to isomorphism, these points are $[1:0:0]$, $[0:1:0]$, and $[0:0:1]$. A pleasant feature of this example is that all of the defining inequalities of the cone of Betti tables have concrete interpretations. It is similar to the examples considered in \cite{BBEG}, but one crucial difference is that our ring is not a hypersurface ring, so that this example bridges ideas from \cite{BBEG} and \cite{kummini-sam}.

\medskip

Now we fix notation. Throughout, $\bk$ is a fixed field. The Cohen--Macaulay ring in question, i.e., the coordinate ring of the $3$ points $[1:0:0]$, $[0:1:0]$, $[0:0:1]$, is $B = \bk[x,y,z]/(xy,yz,xz)$.

Let $\bV$ be the $\bQ$-vector space of tables $(v_{i,j})$ where $i \ge 0$ and $j \in \bZ$ with the property that for each $i$, there are only finitely many $j$ such that $v_{i,j}\ne 0$.
Given a finitely generated graded $B$-module $M$, define $\beta^B(M)_{i,j} = \dim_\bk \Tor^B_i(M, \bk)_j$, which is an element of $\bV$. Our main object of study is the $\bQ_{\ge 0}$-span of $\beta^B(M)$ in $\bV$ as $M$ varies over all finitely generated (respectively, finite length) $B$-modules. Call these cones $\rB_\bQ(B)$ and $\rB^f_\bQ(B)$, respectively.

The main results of this paper are Theorem~\ref{thm:main} and Corollary~\ref{cor:main} which completely describe the cone of Betti tables of all finitely generated $B$-modules, and the cone of Betti tables of all finite length $B$-modules, respectively. We give two descriptions: one in terms of generators, and one in terms of linear inequalities.

In \S\ref{sec:general} we begin with some general results on MCM modules and pure resolutions, and in \S\ref{sec:main} we prove the main results and give a local version of the results.

\subsection*{Acknowledgements}

The software system Macaulay2 \cite{M2} was helpful for carrying out this research. In particular, we made use of the FourierMotzkin package written by Gregory Smith.
Iulia Gheorghita was supported by the Caltech SURF program.
Steven Sam was supported by a Miller research fellowship.

\section{Generalities on $B$-modules} \label{sec:general}

\subsection{Notation}

Given a $B$-module $M$, its Hilbert series $\rH_M(t) = \sum_d (\dim_\bk M_d) t^d$ is of the form $p(t)/(1-t)$ for some polynomial $p(t)$. We define $e(M) = p(1)$. From generalities on Hilbert polynomials, we get $\dim_\bk M_d = e(M)$ for $d \gg 0$. In particular, $e(M) \ge 0$, and $M$ is of finite length if and only if $e(M)=0$. 

The (reduced) syzygy module $\Omega(M)$ of $M$ is the kernel of a surjection $\bF \to M \to 0$ where $\bF$ is a free module whose basis maps to a minimal generating set of $M$.

Finally, if $M$ is a graded module, then $M(-d)$ denotes the same module with a grading shift: $M(-d)_e = M_{e-d}$.

\subsection{MCM modules} \label{sec:MCM}

There are $8$ indecomposable maximal Cohen--Macaulay (MCM) $B$-modules (see \cite[Remark 9.16]{yoshino} for the completion of $B$; to compare this to the graded case we can use \cite[Lemma 15.2.1]{yoshino}): the free module $B$, the canonical module $\omega_B$, and the quotients
\[
M_1 = B/(x), \qquad M_2 = B/(y), \qquad M_3 = B/(z),
\]
\[
M_{1,2} = B/(x,y), \qquad M_{2,3} = B/(y,z), \qquad M_{1,3} = B/(x,z).
\]
Their Hilbert series are as follows:
\[
\rH_B(t) = \frac{1+2t}{1-t}, \qquad \rH_{\omega_B}(t) = \frac{2+t}{1-t}, \qquad \rH_{M_i}(t) = \frac{1+t}{1-t}, \qquad \rH_{M_{i,j}}(t) = \frac{1}{1-t}.
\]
An MCM module $M$ is {\bf pure} if $M$ is (up to a grading shift) of one of the following four forms: 
\[
B^{\oplus r}, \qquad \omega_B^{\oplus r}, \qquad M_1^{\oplus r_1} \oplus M_2^{\oplus r_2} \oplus M_3^{\oplus r_3}, \qquad M_{1,2}^{\oplus r_1} \oplus M_{1,3}^{\oplus r_2} \oplus M_{2,3}^{\oplus r_3}.
\]
Implicit in this definition is that $M$ is generated in a single degree. 

We treat each of the modules $M_i$ the same because their numerical invariants (such as Betti numbers) are the same, and similarly with $M_{i,j}$. We use the convention that $M_i$ may refer to any of $M_1, M_2, M_3$ and similarly for $M_{i,j}$. To clarify, if we say that $M$ is a direct sum of $M_i$, we mean that it is a direct sum of copies of $M_1, M_2, M_3$, and similarly for $M_{i,j}$.

\begin{proposition} \label{prop:syzygy}
The syzygy modules are as follows:
\begin{align*}
\Omega(\omega_B) &= (M_{1,2} \oplus M_{2,3} \oplus M_{1,3})(-1),\\
\Omega(M_i) &= M_{\{1,2,3\} \setminus i}(-1),\\
\Omega(M_{i,j}) &= (M_{j,k} \oplus M_{i,k})(-1),
\end{align*}
where in the last line, $k = \{1,2,3\} \setminus \{i,j\}$.
\end{proposition}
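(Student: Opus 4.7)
The overall plan is to compute $\Omega(M)$ in each case as an explicit submodule of a minimal free cover and then identify it as the claimed direct sum of indecomposable MCM modules. For $M_i$ and $M_{i,j}$ the syzygy is an ideal of $B$, and the relations $xy = yz = xz = 0$ make the decomposition immediate; the nontrivial case is $\omega_B$, where I first produce a concrete presentation via Hilbert--Burch.

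For $M_1 = B/(x)$, the minimal free cover $B \twoheadrightarrow M_1$ has kernel $(x) = Bx$. Because $xy = xz = 0$ while $x^n \neq 0$ for all $n \ge 1$, the annihilator of $x$ in $B$ equals $(y,z)$, so $Bx \cong (B/(y,z))(-1) = M_{2,3}(-1)$, and $M_2, M_3$ are handled by permuting variables. For $M_{1,2} = B/(x,y)$, the kernel of $B \twoheadrightarrow M_{1,2}$ is $(x,y) = Bx + By$; since $Bx$ is spanned (as a $\bk$-vector space) by positive powers of $x$ and $By$ by positive powers of $y$, the intersection vanishes and $(x,y) = Bx \oplus By \cong M_{2,3}(-1) \oplus M_{1,3}(-1)$, giving the general $M_{i,j}$ case by symmetry.

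For $\omega_B$, the Hilbert--Burch theorem gives the minimal free resolution
\[ 0 \to A(-3)^2 \xrightarrow{\phi} A(-2)^3 \to A \to B \to 0 \]
over $A = \bk[x,y,z]$, where $\phi$ is a $3 \times 2$ matrix of linear forms whose maximal minors recover $xy, yz, xz$ up to sign; one explicit choice has columns $(z, -x, 0)^T$ and $(0, x, -y)^T$. Applying $\operatorname{Hom}_A(-, A(-3))$ and using $\omega_B = \operatorname{Ext}^2_A(B, A(-3))$ yields the minimal presentation
\[ B(-1)^3 \xrightarrow{\phi^T} B^2 \to \omega_B \to 0, \]
so $\Omega(\omega_B) \subset B^2$ is generated by the three columns of $\phi^T$, namely $z e_1$, $x(e_2 - e_1)$, and $y e_2$, where $e_1, e_2$ is a basis of $B^2$. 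Each generator has the form ``linear form $\ell$ times a nonzero vector in $\bk^2$'', so its $B$-annihilator equals the annihilator of $\ell$ in $B$, namely $(x,y), (y,z), (x,z)$ respectively; hence the three cyclic submodules are isomorphic to $M_{1,2}(-1), M_{2,3}(-1), M_{1,3}(-1)$. Finally, the Hilbert-series identity $\rH_{B^2}(t) - \rH_{\omega_B}(t) = 3t/(1-t)$ matches $\rH_{(M_{1,2} \oplus M_{2,3} \oplus M_{1,3})(-1)}(t)$, which forces the sum to be direct and to equal $\Omega(\omega_B)$. The main obstacle is the $\omega_B$ computation; once the Hilbert--Burch presentation is set up and dualized, the remainder is annihilator and Hilbert-series bookkeeping.
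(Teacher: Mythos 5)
Your proposal is correct and follows essentially the same route as the paper: the syzygies of $M_i$ and $M_{i,j}$ are computed directly as ideals decomposed via annihilators, and $\Omega(\omega_B)$ is obtained by dualizing the Hilbert--Burch resolution of $B$ over $\bk[x,y,z]$ and identifying the cyclic submodules generated by the columns of the transposed matrix. Your added annihilator and Hilbert-series bookkeeping simply makes explicit the directness of the sum that the paper leaves as an assertion.
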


\begin{proof}
Let $A = \bk[x,y,z]$. The minimal free resolution of $B$ over $A$ is
\[
0 \to A^2 \xrightarrow{\begin{pmatrix} -z & 0 \\ y & -y \\ 0 & x \end{pmatrix}} A^3 \xrightarrow{\begin{pmatrix} xy & xz & yz \end{pmatrix}} A \to B \to 0.
\]
In particular, $\omega_B$ has the following minimal presentation over $A$ (and $B$):
\[
A^3 \xrightarrow{\begin{pmatrix} -z & y & 0 \\ 0 & -y & x \end{pmatrix}} A^2 \to \omega_B \to 0.
\]
The image of the $3$ basis vectors of $A^3$ give, respectively, copies of $M_{1,2}$, $M_{1,3}$, and $M_{2,3}$.

The last two equalities are straightforward calculations.
\end{proof}

\subsection{Herzog--K\"uhl equations} \label{sec:HK}

\begin{definition}
A $B$-module $M$ has a {\bf pure resolution} if it is Cohen--Macaulay, and there is an exact sequence of the form
\[
0 \to \Omega(M) \to \bF_0 \to M \to 0
\]
where $\Omega(M)$ is a pure MCM module and $\bF_0$ is a free $B$-module generated in a single degree. If $M$ is a finite length module, we say that its type is $(d_0,d_1)$ where $d_0$ is the common degree of all generators of $\bF_0$ and $d_1$ is the common degree of all generators of $\Omega(M)$. If $M$ is a pure MCM module, then its degree sequence is simply $(d_0)$ where $d_0$ is the common degree of its generators.
\end{definition}

The Betti numbers of a pure resolution satisfy certain relations which we now describe. Over a polynomial ring, such relations were worked out by Herzog and K\"uhl \cite[Theorem 1]{herzog-kuhl}. If $M$ is a finite length module with pure resolution of type $(d_0, d_1)$ with $d_0 < d_1$, then it has an exact sequence of the form 
\[ 
0 \rightarrow N(-d_1)^{\beta_1} \rightarrow B(-d_0)^{\beta_0} \rightarrow M \rightarrow 0
\] 
where $N$ is an MCM module: $B$, $\omega_B$, $M_i$, or $M_{i, j}$. The latter three modules have the following minimal free resolutions over $B$:
\begin{align*}
\cdots \rightarrow B(-m)^{3 \cdot 2^{m-1}}\rightarrow \cdots \rightarrow B(-2)^6 \rightarrow B(-1)^3 \rightarrow B^2\rightarrow \omega_B \rightarrow 0,\\
\cdots  \rightarrow B(-m)^{2^{m-1}} \rightarrow \cdots \rightarrow B(-2)^2 \rightarrow B(-1) \rightarrow B \rightarrow M_i \rightarrow 0,\\
\cdots \rightarrow B(-m)^{2^m} \rightarrow \cdots \rightarrow B(-2)^4 \rightarrow B(-1)^2 \rightarrow B \rightarrow M_{i,j}\rightarrow 0.
\end{align*}
To prove this, one can use Proposition~\ref{prop:syzygy}. In particular, in all cases, if 
\[
\cdots \to \bF_i \to \bF_{i-1} \to \cdots \to \bF_1 \to \bF_0 \to M \to 0
\]
is the minimal free resolution of $M$ over $B$, then each $\bF_i$ is generated in a single degree $d_i$; let $\beta_i$ be its rank. We see that $d_i = d_{i-1} + 1$ for $i \ge 2$. Also, for $i \ge 2$, we get a relation 
\begin{align} \label{eqn:higher-betti}
\beta_i = 2^{i-2} c(N) \beta_1
\end{align}
where $c(N)$ depends on the isomorphism type of $N$ and is $0,1,\frac{3}{2},2$ if $N$ is $B, M_i, \omega_B, M_{i,j}$, respectively.

\begin{proposition} \label{prop:HK}
Let $M$ be a finite length $B$-module with a pure resolution of type $(d_0,d_1)$. Write $\Omega(B) = N^{\oplus r}$ where $N$ is one of $B, M_i, \omega_B, M_{i,j}$ and set $c = c(N) \in \{0,1,\frac{3}{2},2\}$. Then the Betti numbers of $M$ are a multiple of the entries of the vector
\[
(1, \frac{3}{3-c}, \frac{3c}{3-c}, 2 \frac{3c}{3-c}, 4 \frac{3c}{3-c}, 8 \frac{3c}{3-c}, \dots, 2^{i-2} \frac{3c}{3-c}, \dots)
\]
\end{proposition}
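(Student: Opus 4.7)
The plan is to extract the claimed proportionality from the Hilbert-series identity associated with the minimal free resolution. Since for each degree only finitely many homological indices contribute to $\rH_M(t)$, the identity
\[ \rH_M(t) = \rH_B(t) \sum_{i \ge 0} (-1)^i \beta_i\, t^{d_i} \]
makes sense as a formal power series. Combining this with the recursion $\beta_i = 2^{i-2} c\, \beta_1$ for $i \ge 2$ from \eqref{eqn:higher-betti} and the degree pattern $d_i = d_1 + i - 1$ for $i \ge 1$, I can evaluate the right-hand side in closed form. The finite-length hypothesis on $M$ then translates into $\rH_M(t)$ being a polynomial, which will pin down the ratio $\beta_1/\beta_0$.

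Concretely, I would first collapse the tail by summing a geometric series,
\[ \sum_{i \ge 2} (-1)^i\, 2^{i-2}\, c\, \beta_1\, t^{d_1 + i - 1} \;=\; \frac{c\,\beta_1\, t^{d_1+1}}{1 + 2t}, \]
add in the $i = 0$ and $i = 1$ terms, and multiply by $\rH_B(t) = (1+2t)/(1-t)$ to get
\[ \rH_M(t) \;=\; \frac{(1+2t)\bigl(\beta_0\, t^{d_0} - \beta_1\, t^{d_1}\bigr) + c\, \beta_1\, t^{d_1+1}}{1-t}. \]
Finite length forces the numerator to vanish at $t = 1$, giving $3(\beta_0 - \beta_1) + c\, \beta_1 = 0$, hence $\beta_1 = \tfrac{3}{3-c}\beta_0$. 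Substituting back into $\beta_i = 2^{i-2}\, c\, \beta_1$ for $i \ge 2$ yields $\beta_i = 2^{i-2}\tfrac{3c}{3-c}\beta_0$, which is exactly the claimed vector after normalizing $\beta_0 = 1$.

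The only moving part is the Hilbert-series identification together with the recognition that finite length corresponds to the numerator of $\rH_M(t)$ being divisible by $1-t$; this is the step where the information is actually extracted, and everything else is bookkeeping. The values $c \in \{0, 1, \tfrac{3}{2}, 2\}$ all satisfy $c < 3$, so $\tfrac{3}{3-c}$ is well defined. The degenerate case $c = 0$ (i.e., $N = B$) is handled uniformly: the formula gives $\beta_1 = \beta_0$ and $\beta_i = 0$ for $i \ge 2$, recovering the expected short resolution.
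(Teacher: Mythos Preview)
Your argument is correct and follows essentially the same route as the paper's own proof: compute $\rH_M(t)$ from the Hilbert-series identity for the minimal free resolution, sum the geometric tail using \eqref{eqn:higher-betti}, and then impose divisibility of the numerator by $1-t$ to extract $3(\beta_0-\beta_1)+c\beta_1=0$. Your write-up is in fact slightly more explicit than the paper's in checking that $c<3$ and in noting that the $c=0$ case is consistent.
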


\begin{proof}
Using \eqref{eqn:higher-betti}, we get
\begin{align*}
\rH_M(t) &= \rH_B(t) (\beta_0t^{d_0} - \beta_1t^{d_1} + \beta_1c t^{d_1 + 1} \sum_{i \geq 2}(-1)^i2^{i-2}t^{i-2} ) \\
&=\frac{1+2t}{1-t} (\beta_0t^{d_0} - \beta_1t^{d_1} + \frac{\beta_1 c t^{d_1 + 1}}{1+2t} ) \\ 
&= \frac{(1+2t)(\beta_0t^{d_0} -\beta_1t^{d_1}) + \beta_1 c t^{d_1 + 1} }{1-t}.
\end{align*} 
Since $M$ is finite length, $\rH_M(t)$ is a polynomial, and so the numerator of the above expression is divisible by $1-t$. So the substitution $t \mapsto 1$ gives the relation $3(\beta_0 - \beta_1) + \beta_1c = 0$. Taking $\beta_0=1$ gives us the desired statement.
\end{proof}

\begin{remark} \label{rmk:convex}
The Betti numbers do not depend on the actual degree sequence, just on the isomorphism class of the MCM module. For convenience, we list here the first lattice point on the rays spanned by the vectors from Proposition~\ref{prop:HK}:
\begin{align*}
\bv_1 &= (1, 1, 0, 0, 0, \dots),\\
\bv_2 &= (2, 3, 3, 6, 12, \dots),\\
\bv_3 &= (1, 2, 3, 6, 12, \dots),\\
\bv_4 &= (1, 3, 6, 12, 24, \dots).
\end{align*}
Note that $2\bv_3 = \bv_1 + \bv_4$ and $2\bv_2 = 3\bv_1 + \bv_4$, so that the cone generated by $\bv_1, \bv_2, \bv_3, \bv_4$ is minimally generated by $\bv_1$ and $\bv_4$.
\end{remark}

\section{The cone of Betti diagrams} \label{sec:main}

\subsection{Main result}

Each pure resolution has an associated degree sequence which documents in what degree each term of the resolution is generated. If the resolution is finite, we pad the degree sequence with the symbol $\infty$, so the possible degree sequences are
\[
(d_0, \infty, \infty, \dots), \qquad (d_0, d_1, \infty, \dots), \qquad (d_0, d_1, d_1 + 1, d_1 + 2, \dots)
\]
where $d_0 < d_1$. 

For each degree sequence $d$ define $\pi_d \in \bV$ as follows. First, $(\pi_d)_{0,j} = 1$ for $j = d_0$ and $0$ otherwise. If $d = (d_0, \infty, \infty, \dots)$, $\pi_{i,j} = 0$ for all $i\geq 1$ and all $j$. If $d =(d_0, d_1, \infty, \dots)$, $\pi_{1,j} = 1$ for $j = d_1$ and $0$ otherwise. If $d = (d_0, d_1, d_1 + 1, d_1 + 2, \dots)$, $\pi_{i,j} = 3 \cdot 2^{i-1}$ for $i\geq 1$ and $j = d_i$ and $0$ otherwise. Recall that these are the Betti numbers (up to scalar multiple) of a finite length module whose syzygy module is a direct sum of copies of $M_{i,j}$ and that we do not use the others because of the relations in Remark~\ref{rmk:convex}.

For each $v \in \bV$, define 
\[
\epsilon_{i,j}(v) = v_{i,j}, \quad \alpha_k(v) = 2\epsilon_{1,k}(v) - \epsilon_{2, k+1}(v), \quad \gamma_k(v) = \sum_{j \leq k}(3\epsilon_{0,j}(v) - 3\epsilon_{1, j+1}(v) + \epsilon_{2, j+2}(v)).
\]
We also define $\gamma_\infty$ by summing over all $j$. Note that while $\gamma_k$ involves an infinite sum, only finitely many nonzero terms of $v$ are involved so it is well-defined.

\begin{theorem} \label{thm:main}
The following cones are equal:
\begin{enumerate}[\indent \rm (i)]
\item The cone $\rB_{\bQ}(B)$ spanned by the Betti diagrams of all finitely generated $B$-modules.
\item The cone $D$ spanned by $\pi_d$ for all $B$-degree sequences $d$.
\item The cone $F$ defined to be the intersection of the halfspaces $\{\epsilon_{i, j} \geq 0\}$ for all $i,j \geq 0$, $\{\alpha_k \geq 0\}$ for all $k \in \bZ$, $\{\gamma_k \geq 0\}$ for all $k \in \bZ$, and the subspaces $\{2\epsilon_{i,j} = \epsilon_{i+1,j+1}\}$ for $i \geq 2, j \in \bZ$.
\end{enumerate}
\end{theorem}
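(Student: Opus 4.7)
My plan is to prove the equality of the three cones via the cyclic chain of inclusions $D \subseteq \rB_\bQ(B) \subseteq F \subseteq D$, which is the standard Boij--S\"oderberg schema. The subspace equations for $i \ge 2$ make the essential data of $v \in F$ live in the three rows $v_{0,\bullet}, v_{1,\bullet}, v_{2,\bullet}$, which will organize the decomposition step. \textbf{For $D \subseteq \rB_\bQ(B)$}, I would realize each $\pi_d$ (up to positive scalar) as the Betti table of an explicit $B$-module. For $d = (d_0, \infty, \ldots)$, take $M = B(-d_0)$. For $d = (d_0, d_1, \infty, \ldots)$, take $M = (B/(f^{d_1-d_0}))(-d_0)$ with $f = x+y+z$; a direct check on the basis $\{1\} \cup \{x^n, y^n, z^n : n \ge 1\}$ of $B$ shows $f$ (hence $f^{d_1-d_0}$) is a non-zero-divisor. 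For $d = (d_0, d_1, d_1+1, \ldots)$, take $M = (B/\mathfrak{m}^{d_1-d_0})(-d_0)$; since mixed monomials vanish in $B$, one has $\mathfrak{m}^e = (x^e, y^e, z^e)$, and a direct syzygy computation identifies $\Omega(B/\mathfrak{m}^e) \cong M_{2,3}(-e) \oplus M_{1,3}(-e) \oplus M_{1,2}(-e)$, so iteration of Proposition~\ref{prop:syzygy} yields the pattern $\beta_i = 3 \cdot 2^{i-1}$ of $\pi_d$.

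\textbf{For $\rB_\bQ(B) \subseteq F$}, non-negativity of each $\epsilon_{i,j}$ is tautological. The subspace equations for $i \ge 2$ follow because $\Omega(M)$ is MCM (since $\dim B = 1$, any first syzygy has depth $\ge 1$): decomposing $\Omega(M) = \bigoplus_s N_s(-d_s)$ by the classification of Section~\ref{sec:MCM} and applying Proposition~\ref{prop:syzygy}, each non-free summand's Betti numbers double from index $1$ onward, while free summands contribute only to $\beta_1(M)$. For $\alpha_k \ge 0$, each summand $N_s(-k)$ contributes $2\mu(N_s) - \mu(\Omega(N_s))$ to $2\beta_{1,k}(M) - \beta_{2,k+1}(M)$, which evaluates to $2, 1, 1, 0$ for $N_s$ of type $B, \omega_B, M_i, M_{i,j}$ respectively. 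For $\gamma_k \ge 0$, I would show $\gamma_k(\beta^B(M)) = 3 g_k - \sum_{s: d_s \le k+1} e(N_s)$, where $g_k$ is the number of generators of $M$ in degree $\le k$ and $e(N_s) \in \{3, 3, 2, 1\}$ is the multiplicity of $N_s$; the key observation is that a minimal syzygy of $M$ in degree $\le k+1$ uses only generators of $M$ in degree $\le k$ (a unit-coefficient use of a degree-$(k+1)$ generator would produce a nontrivial linear dependence in $M/\mathfrak{m}M$, contradicting minimality), hence $\bigoplus_{d_s \le k+1} N_s(-d_s) \subseteq \Omega(M)$ embeds into the sub-free-module $\bigoplus_{j \le k} B(-j)^{\beta_{0,j}(M)} \subseteq F_0$, which has multiplicity $3 g_k$, yielding $\sum_{d_s \le k+1} e(N_s) \le 3 g_k$.

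\textbf{For $F \subseteq D$}, the crux, I would give an inductive decomposition algorithm. Given $v \in F \setminus \{0\}$, locate the smallest $d_0$ with $v_{0,d_0} > 0$; the patterns of $v$ in rows $1$ and $2$ then determine uniquely whether to subtract a $\pi_d$ of type $(d_0)$, $(d_0, d_1, \infty)$, or $(d_0, d_1, d_1+1, \ldots)$, with coefficient $c > 0$ maximal subject to $v - c\pi_d \in F$ (a finite minimum over the active linear constraints). The remainder has strictly simpler support, so the process terminates. \textbf{The main obstacle} is this third step: one must verify that the candidate $\pi_d$ always lies in $D$, that $v - c\pi_d$ remains in $F$ under the $\alpha_k$ and $\gamma_k$ inequalities after subtraction, and that the recursion makes monotone progress; the case analysis is more intricate than in \cite{kummini-sam} or \cite{BBEG} because the three flavors of degree sequences must be handled simultaneously.
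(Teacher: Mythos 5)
Your proposal follows essentially the same route as the paper: the same chain $D \subseteq \rB_\bQ(B) \subseteq F \subseteq D$, the same three explicit modules $B$, $B/((x+y+z)^{d_1})$, $B/(x^{d_1},y^{d_1},z^{d_1})$ realizing the rays, and the same multiplicity-based argument for the inequalities. Your treatment of $\gamma_k$ is a correct direct rephrasing of the paper's: where you embed $\bigoplus_{d_s \le k+1} N_s(-d_s)$ into the sub-free-module $\tau_{\le k}(\bF_0)$ and compare multiplicities, the paper packages the same computation by forming the truncated module $\tau_{\le k}(M)$ (using linearity of the differentials from homological degree $2$ on to see that truncation preserves minimal free resolutions) and applying the global inequality $e(\tau_{\le k}(M)) = 3\beta_0 - 3\beta_1 + \beta_2 \ge 0$; the two are equivalent. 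The one step you flag as the main obstacle, $F \subseteq D$, is exactly the step the paper does not write out either: it observes that the greedy/simplicial decomposition of \cite[Lemmas 2.7, 2.8]{BBEG} applies verbatim, with the partial order on degree sequences ($d \le d'$ iff $d_0 \le d_0'$ and $d_1 \le d_1'$, etc.) furnishing the triangulation of $F$ by simplicial cones on maximal chains; your proposed algorithm is the same idea, and the case analysis you anticipate is precisely the content of those cited lemmas.
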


The proof of Theorem~\ref{thm:main} follows by establishing the inclusions $D \subseteq \mathrm{B}_{\bQ}(B) \subseteq F \subseteq D$, which are the content of the next three lemmas.

\begin{lemma}
$D \subseteq \mathrm{B}_{\bQ}(B)$.
\end{lemma}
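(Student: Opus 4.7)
Since $D$ is the $\bQ_{\geq 0}$-cone on the rays $\pi_d$ as $d$ ranges over the three allowed types of $B$-degree sequence, it suffices to exhibit, for each such $d$, a finitely generated graded $B$-module whose Betti diagram is a positive rational multiple of $\pi_d$; I will in fact realize each $\pi_d$ on the nose.

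For $d = (d_0, \infty, \infty, \dots)$, the shifted free module $B(-d_0)$ has Betti diagram exactly $\pi_d$. For $d = (d_0, d_1, \infty, \infty, \dots)$ with $d_0 < d_1$, let $M = B(-d_0)/fB(-d_1)$ where $f = (x+y+z)^{d_1 - d_0}$. Because $x+y+z$ lies outside each of the three associated primes $(x,y)$, $(y,z)$, $(x,z)$ of $B$, it is a non-zero-divisor, hence so is $f$; a short Hilbert series calculation then shows that $M$ is finite length and that the multiplication-by-$f$ map $B(-d_1) \to B(-d_0)$ is a minimal free resolution, so the Betti diagram of $M$ equals $\pi_d$.

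For $d = (d_0, d_1, d_1+1, d_1 + 2, \dots)$, set $n = d_1 - d_0 \ge 1$ and let $M = (B/(x^n, y^n, z^n))(-d_0)$. The relations $xy = yz = xz = 0$ force the ideal $(x^n, y^n, z^n) \subset B$ to decompose as the direct sum of principal submodules $Bx^n \oplus By^n \oplus Bz^n$, and the annihilator of $x^n$ in $B$ is exactly $(y,z)$, so $Bx^n \cong (B/(y,z))(-n) = M_{2,3}(-n)$, and similarly for the other two summands. Hence $\Omega(M) \cong (M_{2,3} \oplus M_{1,3} \oplus M_{1,2})(-d_1)$, and splicing in the minimal resolutions of the three $M_{i,j}$ summands recorded in \S\ref{sec:HK} gives a minimal free resolution of $M$ whose $i$-th term, for $i \ge 1$, is $B(-(d_1 + i - 1))^{3 \cdot 2^{i-1}}$; this matches $\pi_d$.

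The only nontrivial step is guessing the right type-(iii) example; once one notices that the monomial ideal $(x^n, y^n, z^n)$ splits cleanly into shifts of the $M_{i,j}$, each verification reduces to a routine Hilbert series or syzygy computation.
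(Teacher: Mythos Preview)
Your proof is correct and uses exactly the same three modules as the paper (up to the harmless normalization $d_0 = 0$ that the paper adopts): $B$, $B/((x+y+z)^{d_1})$, and $B/(x^{d_1}, y^{d_1}, z^{d_1})$. The paper simply lists these without justification, whereas you supply the details (the non-zero-divisor argument for $x+y+z$ and the direct-sum decomposition of the monomial ideal into shifts of the $M_{i,j}$), so your version is a fleshed-out form of the same argument.
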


\begin{proof}
It suffices to show that for each degree sequence $d$ there exists a $B$-module $M$ with $\beta^B(M) = \pi_d$. We assume $d_0=0$ for simplicity of notation. 
\begin{itemize}[$\bullet$]
\item When $d = (0, \infty, \infty, \dots)$, take $M = B$. 
\item When $d = (0, d_1, \infty, \dots)$, take $M = B/( (x+y+z)^{d_1} )$. 
\item When $d = (0, d_1, d_1 +1, d_1 + 2, \dots)$, take $M = B/(x^{d_1}, y^{d_1}, z^{d_1})$. \qedhere
\end{itemize}
\end{proof}

\begin{lemma} \label{lem:BF}
$\mathrm{B}_{\bQ}(B) \subseteq F$.
\end{lemma}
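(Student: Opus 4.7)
The plan is to verify, for an arbitrary finitely generated graded $B$-module $M$ with minimal free resolution $\bF_\bullet$, each of the four families of constraints defining $F$. Non-negativity $\epsilon_{i,j}(\beta^B(M)) \geq 0$ is immediate. For the equations $2\epsilon_{i,j} = \epsilon_{i+1,j+1}$ with $i \geq 2$: since $B$ is one-dimensional Cohen--Macaulay, $\Omega(M)$ is MCM, and by minimality of the resolution it has no free summand; Proposition~\ref{prop:syzygy} shows that the syzygy of each of $\omega_B$, $M_i$, $M_{i,j}$ is a direct sum of shifted copies of $M_{a,b}$'s, so inductively $\Omega^i(M)$ for every $i \geq 2$ is a direct sum of shifted $M_{a,b}$'s. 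Each summand $M_{a,b}(-d)$ contributes one generator in degree $d$ and two syzygy generators in degree $d+1$, giving the desired equality.

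For the inequality $\alpha_k \geq 0$, I would use the decomposition $\Omega(M) = \bigoplus \omega_B(-d)^{a_d} \oplus \bigoplus M_i(-d)^{b_d} \oplus \bigoplus M_{i,j}(-d)^{c_d}$ into shifted MCM indecomposables. Counting generators directly gives $\beta_1(M)_k = 2a_k + b_k + c_k$, and Proposition~\ref{prop:syzygy} yields $\beta_2(M)_{k+1} = 3a_k + b_k + 2c_k$, so $\alpha_k = a_k + b_k \geq 0$.

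The main step, and the expected hard part, is $\gamma_k \geq 0$. My approach is to realize $\gamma_k$ as the multiplicity of an honest $B$-module. The key observation is that since every differential in a minimal resolution has entries in the maximal ideal, each generator of $\Omega(M)$ of degree $d$, viewed inside $\bF_0$, is supported on $\bF_0$-summands of shift at most $d-1$. Consequently the submodule $\Omega(M)^{(\leq k+1)}$ assembled from those indecomposable summands of $\Omega(M)$ generated in degrees $\leq k+1$ lies inside the sub-free-module $\bF_0^{\leq k}$ consisting of $\bF_0$-summands of shift $\leq k$. Form the short exact sequence
\[
0 \to \Omega(M)^{(\leq k+1)} \to \bF_0^{\leq k} \to Q \to 0.
\]
Then $Q$ is a finitely generated graded $B$-module, so $e(Q) \geq 0$. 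Using $e(B) = 3$ along with $e(\omega_B) = 3$, $e(M_i) = 2$, $e(M_{i,j}) = 1$, and the expressions for $\beta_1(M)$ and $\beta_2(M)$ in terms of $a_d,b_d,c_d$ from the previous paragraph, a routine tally identifies
\[
e(Q) = e(\bF_0^{\leq k}) - e(\Omega(M)^{(\leq k+1)}) = 3\,\textstyle\sum_{j \leq k}\beta_0(M)_j - 3\sum_{j \leq k+1}\beta_1(M)_j + \sum_{j \leq k+2}\beta_2(M)_j = \gamma_k,
\]
completing the proof. The conceptual difficulty is exactly the embedding $\Omega(M)^{(\leq k+1)} \hookrightarrow \bF_0^{\leq k}$; once that is in hand, everything else is an $e$-additivity computation.
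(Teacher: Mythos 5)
Your overall strategy is correct and is essentially the one the paper uses: the paper's truncation $\tau_{\le k}(M)$ is precisely your cokernel $Q$ (the image of $\tau_{\le k+1}(\bF_1)$ in $\bF_0$ is your $\Omega(M)^{(\le k+1)}$, and your ``key observation'' that it lands in $\bF_0^{\le k}$ is exactly what makes that truncation well defined). The only substantive difference is that the paper identifies the entire minimal free resolution of $\tau_{\le k}(M)$, using linearity of the differentials from homological degree $2$ onward, and then applies the global inequality $3\beta_0 - 3\beta_1 + \beta_2 \ge 0$ to that module, whereas you compute $e(Q)$ directly from the decomposition of $\Omega(M)^{(\le k+1)}$ into indecomposable MCM summands. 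Both tallies yield $\gamma_k$, so this is a cosmetic rather than a mathematical difference.

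One claim in your write-up is false, however: ``by minimality of the resolution $\Omega(M)$ has no free summand.'' The first syzygy of a $B$-module can certainly be free. For instance $x+y+z$ is a nonzerodivisor on $B$, so for $M = B/(x+y+z)$ the minimal resolution is $0 \to B(-1) \to B \to M \to 0$ and $\Omega(M) \cong B(-1)$; this is exactly the module the paper uses to realize the degree sequence $(0,1,\infty,\dots)$. You must therefore allow summands $B(-d)^{e_d}$ in your decomposition of $\Omega(M)$. Fortunately nothing breaks: a free summand contributes $1$ to $\beta_1(M)_d$ and $0$ to $\beta_2(M)_{d+1}$, so your $\alpha_k$ becomes $2e_k + a_k + b_k \ge 0$; and since $e(B) = 3 = 3\cdot 1 - 0$, the identity $e\bigl(\Omega(M)^{(\le k+1)}\bigr) = 3\sum_{j \le k+1}\beta_1(M)_j - \sum_{j \le k+2}\beta_2(M)_j$, and hence $e(Q) = \gamma_k$, is unaffected. (Free summands of $\Omega(M)$ also die under a further application of $\Omega$, so your conclusion that $\Omega^i(M)$ is a direct sum of shifted $M_{a,b}$'s for $i \ge 2$ still stands.) With this correction the proof is complete.
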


\begin{proof}
For a finitely generated graded $B$-module $M$, we want to show that the inequalities defining $F$ are nonnegative on $\beta^B(M)$. There are no negative entries in $\beta^B(M)$, so $\epsilon_{i,j}(\beta^B(M)) = \beta_{i,j}^B(M) \geq 0$ for all $i,j$. The inequalities $\{\alpha_k \geq 0 \mid k \in \bZ\}$ and the equalities $\{ 2\epsilon_{i,j} = \epsilon_{i+1,j+1} \mid i \ge 2,\ j \in \bZ \}$ hold for $\beta^B(M)$ by our discussion in \S\ref{sec:HK}.

It remains to show that $\gamma_k(\beta^B(M)) \ge 0$ for all $k \in \bZ$. Let $\cdots \to \bF_1 \to \bF_0 \to M \to 0$ be a minimal free resolution of $M$. Suppose that $\Omega(M) \cong B^{\oplus \alpha_0} \oplus \omega_B^{\oplus \alpha_1} \oplus M_i^{\oplus \alpha_2} \oplus M_{i,j}^{\oplus \alpha_3}$. Let $\beta_i$ be the $i$th Betti number of $M$ (ignoring the grading). From \S\ref{sec:MCM} and \S\ref{sec:HK}, we can read off the relations
\begin{align*}
e(\Omega(M)) &= 3\alpha_0 + 3\alpha_1 + 2\alpha_2 + \alpha_3,\\
\beta_1 &= \alpha_0 + 2\alpha_1 + \alpha_2 + \alpha_3,\\
\beta_2 &= 3\alpha_1 + \alpha_2 + 2\alpha_3.
\end{align*}
In particular, $e(\Omega(M)) = 3\beta_1 - \beta_2$. From the short exact sequence 
\[
0 \to \Omega(M) \to \bF_0 \to M \to 0,
\]
we get $e(M) = e(\bF_0) - e(\Omega(M)) = 3\beta_0 - 3\beta_1 + \beta_2$, and $e(M)$ is a nonnegative quantity, so 
\begin{align} \label{eqn:mult-ineq}
3 \beta_0 - 3\beta_1 + \beta_2 \ge 0.
\end{align} 

If $\bF$ is a free $B$-module, let $\tau_{\le k}(\bF)$ be the free module generated by minimal generators of degree $\le k$. For a general $B$-module $M$, define $\tau_{\le k}(M)$ to be the quotient of $\tau_{\le k+1}(\bF_1) \to \tau_{\le k}(\bF_0)$. In \S\ref{sec:HK}, we established that the differentials $\bF_i \to \bF_{i-1}$ are linear for $i \ge 2$, and so we conclude that the following is a minimal free resolution of $\tau_{\le k}(M)$:
\[
\cdots \to \tau_{\le k+i}(\bF_i) \to \tau_{\le k+i-1}(\bF_{i-1}) \to \cdots \to \tau_{\le k+1}(\bF_1) \to \tau_{\le k}(\bF_0) \to \tau_{\le k}(M) \to 0.
\]
In particular, applying \eqref{eqn:mult-ineq} to $\tau_{\le k}(M)$, we conclude that $\gamma_k(\beta^B(M)) \ge 0$, as desired.
\end{proof}

\begin{lemma}
$F \subseteq D$.
\end{lemma}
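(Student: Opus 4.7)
The plan is to construct an explicit non-negative rational decomposition of any $v \in F$ as a sum of pure diagrams, by reducing the task to the feasibility of a bipartite transportation problem whose cut conditions are exactly the inequalities $\gamma_k(v) \ge 0$.

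First I would organize the pure diagrams into three families according to their profile in rows $i = 0, 1, 2$: Type~1 diagrams $\pi^{(1)}_{d_0}$ (from $(d_0, \infty, \dots)$) with only the $(0, d_0)$ entry; Type~2 diagrams $\pi^{(2)}_{d_0, d_1}$ with $d_0 < d_1$ (from $(d_0, d_1, \infty, \dots)$) with row $0,1,2$ entries $(1, 1, 0)$ at columns $(d_0, d_1, *)$; and Type~3 diagrams $\pi^{(3)}_{d_0, d_1}$ with $d_0 < d_1$ (from $(d_0, d_1, d_1+1, \dots)$) with entries $(1, 3, 6)$ at $(d_0, d_1, d_1+1)$ and rows $i \ge 3$ determined by the subspace equation $2\epsilon_{i,j} = \epsilon_{i+1, j+1}$. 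Since every $v \in F$ also satisfies the subspace equations, any candidate decomposition matching $v$ in rows $0, 1, 2$ automatically matches $v$ everywhere.

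Equating entries in the ansatz $v = \sum c^{(1)}_{d_0} \pi^{(1)}_{d_0} + \sum c^{(2)}_{d_0, d_1} \pi^{(2)}_{d_0, d_1} + \sum c^{(3)}_{d_0, d_1} \pi^{(3)}_{d_0, d_1}$, the $i = 2$ equation forces the column sums $C^{(3)}_{d_1} := \sum_{d_0 < d_1} c^{(3)}_{d_0, d_1} = \epsilon_{2, d_1+1}(v)/6$, and the $i = 1$ equation then forces $C^{(2)}_{d_1} := \sum_{d_0 < d_1} c^{(2)}_{d_0, d_1} = \alpha_{d_1}(v)/2$, which is non-negative since $v \in F$. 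Writing $C_{d_1} = C^{(2)}_{d_1} + C^{(3)}_{d_1}$ and $c_{d_0, d_1} = c^{(2)}_{d_0, d_1} + c^{(3)}_{d_0, d_1}$, the existence of the decomposition reduces to finding non-negative $c_{d_0, d_1}$ (for $d_0 < d_1$) with column sums $\sum_{d_0} c_{d_0, d_1} = C_{d_1}$ and row sums $\sum_{d_1} c_{d_0, d_1} \le \epsilon_{0, d_0}(v)$. Given such $c_{d_0, d_1}$, I split each in the ratio $C^{(2)}_{d_1} : C^{(3)}_{d_1}$ (possible since $C^{(3)}_{d_1} \le C_{d_1}$) to recover $c^{(2)}_{d_0, d_1}$ and $c^{(3)}_{d_0, d_1}$, and absorb the row-side slack into the $c^{(1)}_{d_0}$'s.

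The main obstacle is verifying feasibility of this transportation problem. By a Hall-type argument for the interval bipartite graph with edges $\{(d_0, d_1) : d_0 < d_1\}$, feasibility reduces to the cut conditions
\[
\sum_{d_1 \le k} C_{d_1} \le \sum_{d_0 \le k-1} \epsilon_{0, d_0}(v) \qquad \text{for every } k \in \bZ,
\]
together with the global inequality $\sum_{d_1} C_{d_1} \le \sum_{d_0} \epsilon_{0, d_0}(v)$ obtained as $k \to \infty$. Substituting $C_{d_1} = \epsilon_{1, d_1}(v) - \epsilon_{2, d_1+1}(v)/3$, clearing the denominator of $3$, and reindexing, the per-$k$ condition becomes $\gamma_{k-1}(v) \ge 0$ and the global condition becomes $\gamma_\infty(v) \ge 0$, both of which hold because $v \in F$. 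This establishes the decomposition and shows $v \in D$.
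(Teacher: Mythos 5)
Your argument is correct, but it takes a genuinely different route from the paper, which simply invokes the greedy decomposition algorithm of \cite[Lemmas 2.7, 2.8]{BBEG}: there one orders the degree sequences, repeatedly subtracts the largest admissible multiple of the pure diagram attached to the minimal degree sequence occurring in $v$, and checks inductively that the remainder stays in $F$. You instead set up the decomposition all at once as a transportation problem, and your bookkeeping is accurate: matching rows $0,1,2$ suffices because the subspace equations $2\epsilon_{i,j}=\epsilon_{i+1,j+1}$ ($i\ge 2$) propagate row $2$ upward on both sides; the forced column sums $C^{(3)}_{d_1}=\epsilon_{2,d_1+1}(v)/6$ and $C^{(2)}_{d_1}=\alpha_{d_1}(v)/2$ are nonnegative on $F$; the problem is finite since each row of $v$ has finite support, so the Gale--Hoffman feasibility criterion applies, and because demands are nonnegative and neighborhoods are the down-sets $\{d_0\le k-1\}$, the only binding cuts are the ones you list; and the identity $3\sum_{d_0\le k-1}\epsilon_{0,d_0}(v)-3\sum_{d_1\le k}C_{d_1}=\gamma_{k-1}(v)$ checks out exactly. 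What each approach buys: the greedy induction of \cite{BBEG} also produces the triangulation of $\rB_\bQ(B)$ by chains in the partial order on degree sequences (used in the remark following this lemma), which your flow argument does not directly yield; on the other hand, your argument explains conceptually why the functionals $\gamma_k$ are precisely the facet inequalities --- they are the cut conditions of the flow network --- and it avoids the case analysis needed to verify that the greedy remainder stays in $F$. Two points worth making explicit in a final write-up: the feasible region of your transportation problem is a nonempty rational polyhedron, so a rational solution exists (needed since $D$ is a $\bQ_{\ge 0}$-cone), and the splitting of $c_{d_0,d_1}$ into $c^{(2)}_{d_0,d_1}$ and $c^{(3)}_{d_0,d_1}$ is simply multiplication by $C^{(2)}_{d_1}/C_{d_1}$ and $C^{(3)}_{d_1}/C_{d_1}$ when $C_{d_1}>0$, which preserves nonnegativity and the prescribed column sums.
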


\begin{proof}
The proof is formally the same as the proofs of \cite[Lemmas 2.7, 2.8]{BBEG}.
\end{proof}

\begin{remark}
Following \cite{BBEG}, define a partial order on the degree sequences by $d \leq d'$ if $d_0 \leq d_0'$ and $d_1 \leq d_1'$ where either inequality is strict, or if $d_0 = d_0', d_1 = d_1',$ and $d_n \leq d_n'$ for all $n \geq 2$. Then the proofs of \cite[Lemmas 2.7, 2.8]{BBEG} show that $\rB_\bQ(B)$ has a triangulation coming from the simplicial cones spanned by the rays corresponding to the elements of maximal chains in this partial order.
\end{remark}

From Theorem~\ref{thm:main}, we immediately get a result about the cone of finite length $B$-modules:

\begin{corollary} \label{cor:main}
The following cones are equal:
\begin{enumerate}[\indent \rm (i)]
\item The cone $\rB^f_{\bQ}(B)$ spanned by the Betti diagrams of all finite length $B$-modules.
\item The cone $D^f$ spanned by $\pi_d$ for all $B$-degree sequences $d$ with $d_1 < \infty$.
\item The cone $F^f$ defined to be the intersection of the halfspaces $\{\epsilon_{i, j} \geq 0\}$ for all $i,j \geq 0$, $\{\alpha_k \geq 0\}$ for all $k \in \bZ$, $\{\gamma_k \ge 0\}$ for all $k \in \bZ$, and the subspaces $\{2\epsilon_{i,j} - \epsilon_{i+1,j+1} = 0 \}$ for $i \geq 2, j \in \bZ$ and $\gamma_\infty = 0$.
\end{enumerate}
\end{corollary}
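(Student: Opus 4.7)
The plan is to establish the three inclusions $D^f \subseteq \rB^f_\bQ(B) \subseteq F^f \subseteq D^f$, bootstrapping off Theorem~\ref{thm:main} at each step.

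For $D^f \subseteq \rB^f_\bQ(B)$, I would reuse the witnesses from the proof of $D \subseteq \rB_\bQ(B)$ and check that, when $d_1 < \infty$, they are finite length. For $d = (0,d_1,\infty,\dots)$ the module $B/((x+y+z)^{d_1})$ works because $x+y+z$ is a nonzerodivisor in $B$: multiplication by it acts as the shift $x^i \mapsto x^{i+1}$ on each of the three monomial arms of $B$ (using $xy=yz=xz=0$), so it is injective, and the quotient by a power of a nonzerodivisor on a one-dimensional ring is zero-dimensional. For $d = (0,d_1,d_1+1,\dots)$ the module $B/(x^{d_1},y^{d_1},z^{d_1})$ is visibly supported only at the origin.

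For $\rB^f_\bQ(B) \subseteq F^f$, Lemma~\ref{lem:BF} already gives $\rB^f_\bQ(B) \subseteq F$, so only the extra equation $\gamma_\infty = 0$ needs verification. But the proof of Lemma~\ref{lem:BF} establishes $e(M) = 3\beta_0 - 3\beta_1 + \beta_2$ for any finitely generated $M$, and the right-hand side is exactly $\gamma_\infty(\beta^B(M))$. When $M$ has finite length, $e(M) = 0$ by definition, so $\gamma_\infty(\beta^B(M)) = 0$, as required.

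The remaining inclusion $F^f \subseteq D^f$ is the crux, and is where the functional $\gamma_\infty$ does all the work. Given $v \in F^f \subseteq F$, Theorem~\ref{thm:main} writes $v = \sum_d c_d \pi_d$ with $c_d \ge 0$, the sum running over all $B$-degree sequences. A direct calculation shows $\gamma_\infty(\pi_d) = 3$ when $d_1 = \infty$ (only the $\pi_{0,d_0}$ contribution survives) and $\gamma_\infty(\pi_d) = 0$ when $d_1 < \infty$ (the contributions from $d_0$ and $d_1$ cancel, with the $\pi_{2,d_1+1}$ term absorbing the discrepancy in the infinite tail case). Since $\gamma_\infty(v) = 0$ and each $c_d \ge 0$, the terms with $d_1 = \infty$ must vanish, placing $v \in D^f$. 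I do not anticipate a real obstacle here: once Theorem~\ref{thm:main} is in hand, the single linear functional $\gamma_\infty$ cleanly separates the two types of pure diagrams and reduces the corollary to a one-line sign argument.
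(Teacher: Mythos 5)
Your proof is correct and follows essentially the same route as the paper: both reduce to Theorem~\ref{thm:main} and rest on the facts that $\gamma_\infty(\beta^B(M)) = e(M)$ and that $\gamma_\infty(\pi_d)$ vanishes exactly when $d_1 < \infty$, your sign argument for $F^f \subseteq D^f$ being just the explicit form of the paper's assertion that the ``extra conditions coincide.'' (Incidentally, your value $\gamma_\infty(\pi_d) = 3$ for $d_1 = \infty$ is the one that follows from the definition---the paper writes $1$---but only nonvanishing is needed.)
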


\begin{proof}
In each of three items, we have added one extra condition to the cones defined in Theorem~\ref{thm:main}, so we just have to verify that these conditions are the same. From the proof of Lemma~\ref{lem:BF}, we get that $\gamma_\infty(\beta^B(M)) = e(M)$. Since $e(M)=0$ if and only if $M$ has finite length, this shows that the extra conditions in (a) and (c) coincide. From Proposition~\ref{prop:HK}, $\gamma_\infty(\pi_d)=0$ if $d_2 < \infty$. If $d_1 < \infty$ and $d_2=\infty$, it is immediate from the definition that $\gamma_\infty(\pi_d)=0$. Finally, for the last case with $d_1=\infty$, we have $\gamma_\infty(\pi_d)=1$, so the extra conditions in (b) and (c) coincide.
\end{proof}

\subsection{Local version}

Following \cite{BEKS}, we now give a local version of our main result by considering minimal free resolutions over the completion $\hat{B} =\bk[\![x,y,z]\!]/(xy,yz,xz)$. In this case, the relevant numerical invariant is the Betti sequence $\beta(M)$ of a module $M$ which records the number of generators of each term in a minimal free resolution of $M$.

As discussed above, the only relevant information are the first $3$ Betti numbers $(\beta_0, \beta_1, \beta_2)$ since $\beta_i = 2\beta_{i-1}$ for $i > 2$. The following results are easy to prove from what we have already discussed.

\begin{proposition}
The following cones are equal:
\begin{enumerate}[\indent \rm (i)]
\item The cone spanned by the Betti sequences of all finitely generated $\hat{B}$-modules.
\item The cone spanned by $(1,0,0,\dots)$, $(1,1,0,\dots)$, and $(1,3,6,\dots)$.
\item The cone defined by the inequalities $\beta_2 \ge 0$, $3\beta_0 + \beta_2 \ge 3\beta_1$, and $2\beta_1 \ge \beta_2$, and the equalities $\beta_i = 2\beta_{i-1}$ for $i > 2$.
\end{enumerate}
\end{proposition}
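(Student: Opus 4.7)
The plan is to mirror Theorem~\ref{thm:main} by establishing $(\text{ii}) \subseteq (\text{i}) \subseteq (\text{iii}) \subseteq (\text{ii})$, but now in a much smaller ambient space, since the equality $\beta_i = 2\beta_{i-1}$ determines $\beta_i$ for $i \ge 3$ from $\beta_2$.

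For $(\text{ii}) \subseteq (\text{i})$, I would exhibit explicit $\hat{B}$-modules realizing each of the three generators. The free module $\hat{B}$ itself gives $(1,0,0,\dots)$; a cyclic module $\hat{B}/(f)$ with $f$ a nonzerodivisor (e.g.\ $f = x+y+z$) gives $(1,1,0,\dots)$; and the residue field $\bk = \hat{B}/(x,y,z)$ gives $(1,3,6,12,\dots)$, since by Proposition~\ref{prop:syzygy} its first syzygy is $M_{1,2} \oplus M_{2,3} \oplus M_{1,3}$ and each $M_{i,j}$ resolves with Betti sequence $(1,2,4,8,\dots)$.

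For $(\text{i}) \subseteq (\text{iii})$, I would transport the arguments of \S\ref{sec:HK} and Lemma~\ref{lem:BF} to the local setting. Nonnegativity $\beta_2 \ge 0$ is automatic. For the doubling $\beta_i = 2\beta_{i-1}$ when $i > 2$: writing $\Omega(M) \cong \hat{B}^{\alpha_0} \oplus \omega_{\hat{B}}^{\alpha_1} \oplus (M_i)^{\alpha_2} \oplus (M_{i,j})^{\alpha_3}$ via the classification of MCM $\hat{B}$-modules, each summand besides $\hat{B}$ has a minimal resolution whose Betti numbers double from step $1$ onward, so the Betti sequence of $\Omega(M)$ doubles from step $2$ onward, yielding $\beta_i(M) = 2\beta_{i-1}(M)$ for $i \ge 3$. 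The inequality $2\beta_1 \ge \beta_2$ is the ungraded sum of the inequalities $\alpha_k \ge 0$ from Theorem~\ref{thm:main}. The remaining inequality $3\beta_0 - 3\beta_1 + \beta_2 \ge 0$ is proved exactly as in Lemma~\ref{lem:BF}: the same bookkeeping with $\alpha_0,\alpha_1,\alpha_2,\alpha_3$ identifies the left side with the multiplicity $e(M)$ of the $1$-dimensional local ring $\hat{B}$, which is nonnegative.

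For $(\text{iii}) \subseteq (\text{ii})$, this reduces to a triangular $3 \times 3$ linear system. Writing a prospective element as $a(1,0,0,\dots) + b(1,1,0,\dots) + c(1,3,6,\dots)$ and matching the first three coordinates gives
\[
c = \beta_2/6, \qquad b = \beta_1 - \beta_2/2, \qquad a = \beta_0 - \beta_1 + \beta_2/3,
\]
so the defining inequalities of $(\text{iii})$ translate exactly to $c \ge 0$, $b \ge 0$, and $a \ge 0$. The main obstacle, as in Lemma~\ref{lem:BF}, is the multiplicity inequality in the second containment: formalizing $e(M) = 3\beta_0 - 3\beta_1 + \beta_2$ and its nonnegativity without recourse to graded Hilbert series. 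This is handled by standard multiplicity theory for the $1$-dimensional Cohen--Macaulay local ring $\hat{B}$; all other steps are routine translations of the graded arguments.
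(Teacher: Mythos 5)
Your proposal is correct and matches what the paper intends: the paper omits the proof ("easy to prove from what we have already discussed"), and your argument is exactly the transport of the graded machinery — explicit modules for the generators, the MCM decomposition of $\Omega(M)$ for the doubling and the inequalities $2\beta_1 \ge \beta_2$ and $e(M) = 3\beta_0 - 3\beta_1 + \beta_2 \ge 0$, and the triangular change of basis for the last inclusion. The only nitpick is that Proposition~\ref{prop:syzygy} does not literally cover $\Omega(\bk)$; you should instead note directly that $(x,y,z) \cong M_{2,3} \oplus M_{1,3} \oplus M_{1,2}$ (up to shift), or equivalently take $M = B/(x,y,z)$ as in the paper's proof that $D \subseteq \rB_\bQ(B)$.
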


\begin{proposition}
The following cones are equal:
\begin{enumerate}[\indent \rm (i)]
\item The cone spanned by the Betti sequences of all finite length $\hat{B}$-modules.
\item The cone spanned by $(1,1,0,\dots)$, and $(1,3,6,\dots)$.
\item The cone defined by the inequalities $\beta_2 \ge 0$ and $2\beta_1 \ge \beta_2$, and the equalities $3\beta_0 + \beta_2 = 3\beta_1$ and $\beta_i = 2\beta_{i-1}$ for $i > 2$.
\end{enumerate}
\end{proposition}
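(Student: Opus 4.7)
The plan is to mirror the three-way inclusion strategy of Theorem~\ref{thm:main} and Corollary~\ref{cor:main} in the ungraded setting: establish (ii) $\subseteq$ (i) $\subseteq$ (iii) $\subseteq$ (ii). Nothing new is needed beyond the ungraded shadow of the results of Section~\ref{sec:general}, together with a two-dimensional linear-algebra argument at the end.

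For (ii) $\subseteq$ (i), I would realize each of the two rays by an explicit finite-length $\hat{B}$-module, using the same examples that appeared in the proof of $D \subseteq \rB_\bQ(B)$. The element $x+y+z$ is a non-zero divisor on $\hat{B}$ since it lies outside each minimal prime $(x,y)$, $(y,z)$, $(x,z)$, so $\hat{B}/(x+y+z)$ is finite length with Betti sequence $(1,1,0,\dots)$. For the other ray, $\hat{B}/(x,y,z) \cong \bk$ has first syzygy $(x) \oplus (y) \oplus (z) \cong M_{2,3} \oplus M_{1,3} \oplus M_{1,2}$, so by Proposition~\ref{prop:HK} its Betti sequence matches $\bv_4 = (1,3,6,12,\dots)$ from Remark~\ref{rmk:convex}.

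For (i) $\subseteq$ (iii), let $M$ be a finite-length $\hat{B}$-module. Decomposing $\Omega(M) = \hat{B}^{\alpha_0} \oplus \omega_{\hat{B}}^{\alpha_1} \oplus M_i^{\alpha_2} \oplus M_{i,j}^{\alpha_3}$ exactly as in Lemma~\ref{lem:BF} yields the same relations $\beta_1 = \alpha_0 + 2\alpha_1 + \alpha_2 + \alpha_3$ and $\beta_2 = 3\alpha_1 + \alpha_2 + 2\alpha_3$, so $2\beta_1 - \beta_2 = 2\alpha_0 + \alpha_1 + \alpha_2 \ge 0$ and $3\beta_0 - 3\beta_1 + \beta_2 = e(M) = 0$. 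For the doubling $\beta_i = 2\beta_{i-1}$ when $i > 2$, I would note that each non-free indecomposable MCM module $\omega_{\hat{B}}$, $M_i$, $M_{i,j}$ satisfies $\beta_n = 2\beta_{n-1}$ for $n \ge 2$ (this can be read off the resolutions tabulated in Section~\ref{sec:HK}), and the free summand of $\Omega(M)$ contributes zero to $\beta_i(M)$ for $i \ge 2$; hence $\beta_i(M) = \beta_{i-1}(\Omega M)$ inherits the doubling for $i \ge 3$.

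For (iii) $\subseteq$ (ii), the doubling relation forces every element $\beta$ of cone (iii) to be determined by its first three coordinates $(\beta_0, \beta_1, \beta_2)$. In this truncation the cone lives on the $2$-plane $\{3\beta_0 + \beta_2 = 3\beta_1\}$ and is cut out there by the two inequalities $\beta_2 \ge 0$ and $2\beta_1 - \beta_2 \ge 0$. Its two extremal rays occur where one inequality degenerates: $\beta_2 = 0$ gives the ray through $(1,1,0)$, and $2\beta_1 = \beta_2$ gives the ray through $(1,3,6)$. Hence every point of cone (iii) is a non-negative combination of $(1,1,0,\dots)$ and $(1,3,6,\dots)$. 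The only step requiring any care is the doubling verification in (i) $\subseteq$ (iii); since the indecomposable MCM resolutions are fully explicit, this is routine bookkeeping rather than a genuine obstacle.
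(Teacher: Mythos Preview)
Your proposal is correct and is precisely the ungraded specialization of the argument for Theorem~\ref{thm:main} and Corollary~\ref{cor:main} that the paper has in mind when it says the local statements are ``easy to prove from what we have already discussed.'' The realization of the two rays, the MCM decomposition of $\Omega(M)$ yielding $2\beta_1-\beta_2\ge 0$ and $3\beta_0-3\beta_1+\beta_2=e(M)=0$, the doubling for $i>2$ via the explicit resolutions of the non-free MCMs, and the final two-dimensional extremal-ray computation are all accurate and match the paper's intended route.
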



\begin{thebibliography}{BBEG}

\bibitem[BBEG]{BBEG} Christine Berkesch, Jesse Burke, Daniel Erman, Courtney Gibbons, The cone
of Betti diagrams over a hypersurface ring of low embedding dimension, {\it J. Pure Appl. Algebra} {\bf 216} (2012), no.~10, 2256--2268, \arxiv{1109.5198v2}.

\bibitem[BEKS]{BEKS} Christine Berkesch, Daniel Erman, Manoj Kummini, Steven~V Sam, The shape
of free resolutions over local rings, {\it Math. Ann.} {\bf 216} (2012), no.~10, 2256--2268, \arxiv{1105.2244v2}.

\bibitem[BS1]{boij-soderberg} Mats Boij, Jonas S\"oderberg, Graded Betti numbers of Cohen-Macaulay modules and the multiplicity conjecture, {\it J. Lond. Math. Soc. (2)} {\bf 78} (2008), no.~1, 85--106, \arxiv{math/0611081v2}.

\bibitem[BS2]{bs2} Mats Boij, Jonas S\"oderberg, Betti numbers of graded modules
and the multiplicity conjecture in the non-Cohen-Macaulay case, {\it
Algebra Number Theory} {\bf 6}  (2012), no.~3, 437--454,
\arxiv{0803.1645v1}.

\bibitem[EE]{eisenbud-erman} David Eisenbud, Daniel Erman, Categorified duality in Boij--S\"oderberg theory and invariants of free complexes, \arxiv{1205.0449v1}.

\bibitem[EH]{eisenbud-herzog} David Eisenbud, J\"urgen Herzog, The classification of homogeneous Cohen-Macaulay rings of finite representation type, {\it Math. Ann.} {\bf 280} (1988), no.~2, 347--352.

\bibitem[ES1]{es:bs} David Eisenbud, Frank-Olaf Schreyer, Betti numbers of graded modules and cohomology of vector bundles, {\it J. Amer. Math. Soc.} {\bf 22} (2009), no.~3, 859--888, \arxiv{0712.1843v3}.

\bibitem[ES2]{es:survey} David Eisenbud, Frank-Olaf Schreyer, Boij--S\"oderberg theory, {\it Combinatorial aspects of commutative algebra and algebraic geometry}, 35--48, Abel Symp., {\bf 6}, Springer, Berlin, 2011.

\bibitem[Fl]{floystad} Gunnar Fl\o ystad, Boij--S\"oderberg theory: Introduction and survey, {\it Progress in commutative algebra 1}, 1--54, de Gruyter, Berlin, 2012, \arxiv{1106.0381v2}.

\bibitem[M2]{M2} Daniel R.~Grayson, Michael E. Stillman, Macaulay2, a software system for research in algebraic geometry, available at \url{http://www.math.uiuc.edu/Macaulay2/}.

\bibitem[HK]{herzog-kuhl} J.~Herzog, M.~K\"uhl, On the Betti numbers of finite pure and linear resolutions, {\it Comm. Algebra} {\bf 12} (1984), no.~13-14, 1627--1646.

\bibitem[KS]{kummini-sam} Manoj Kummini, Steven~V Sam, The cone of Betti tables over a rational normal curve, \arxiv{1301.7005v2}.

\bibitem[Yo]{yoshino} Yuji Yoshino, \textit{Cohen-Macaulay modules over Cohen-Macaulay rings}, London Mathematical Society Lecture Note Series \textbf{146} (1990).

\end{thebibliography}
\end{document}